\title{Gauss-Seidel Method with Oblique Direction}
\author{Fang Wang, Weiguo Li$^*$, Wendi Bao, Zhonglu Lv\\
\small China University of Petroleum\\[-0.8ex]
\small Qingdao, China\\
\small\tt $^*$liwg@upc.edu.cn\\
}
\date{June 2021}
\newtheorem{theorem}{Theorem}
\newtheorem{lemma}{Lemma}
\newtheorem{example}{Example}
\newenvironment{breakablealgorithm}
  {
   \begin{center}
     \refstepcounter{algorithm}
     \hrule height.8pt depth0pt \kern2pt
     \renewcommand{\caption}[2][\relax]{
       {\raggedright\textbf{\ALG@name~\thealgorithm} ##2\par}%
       \ifx\relax##1\relax 
         \addcontentsline{loa}{algorithm}{\protect\numberline{\thealgorithm}##2}%
       \else 
         \addcontentsline{loa}{algorithm}{\protect\numberline{\thealgorithm}##1}%
       \fi
       \kern2pt\hrule\kern2pt
     }
  }{
     \kern2pt\hrule\relax
   \end{center}
  }
\begin{document}
\bibliographystyle{plain}
\maketitle
\begin{abstract}
In this paper, a Gauss-Seidel method with oblique direction (GSO) is proposed for finding the least-squares solution to a system of linear equations, where the coefficient matrix may be full rank or rank deficient and the system is overdetermined or underdetermined. Through this method, the number of iteration steps and running time can be reduced to a greater extent to find the least-squares solution,
especially when the columns of matrix A are close to linear correlation. It is theoretically proved that GSO method converges to the least-squares solution.
At the same time, a randomized version--randomized Gauss-Seidel method with oblique direction (RGSO) is established, and its convergence is proved. Theoretical proof and numerical results show that the GSO method and the RGSO method are more efficient than the coordinate descent (CD) method and the randomized coordinate descent (RCD) method.\\

{\bf Key words:} linear least-squares problem, oblique direction, coordinate descent method, randomization, convergence property.
\end{abstract}
\section{Introduction}
\label{intro}
\hspace{1.5em}Consider a linear least-squares problem

\begin{align}\label{e1}
\mathop{arg\min}\limits_{x \in R^n}\|Ax-b\|^2,
\end{align}
where $b\in R^m$ is a real $m$ dimensional vector, and the columns of coefficient matrix $A\in R^{m\times n}$ are non-zero, which doesn't lose the generality of matrix $A$. Here and in the sequel, $\|\cdot\|$ indicates the Euclidean norm of a vector. When $A$ is full column rank, $(\ref{e1})$ has a unique solution $x^*=A^{\dag}b=(A^TA)^{-1}A^Tb$, where $A^{\dag}$ and $A^T$ are the Moore-Penrose pseudoinverse \cite{Be74} and the transpose of $A$, respectively.
One of the iteration methods that can be used to solve $(\ref{e1})$ economically and effectively is the coordinate descent (CD) method \cite{D10}, which is also obtained by applying the classical Gauss-Seidel iteration method to the following normal equation (see \cite{Ru83})
\begin{align}\label{e2}
A^TAx=A^Tb.
\end{align}

In solving $(\ref{e1})$, the CD method has a long history of development, and is widely used in various fields, such as machine learning \cite{CH08}, biological feature selection \cite{BH11}, tomography \cite{BS96,YW99}, and so on. Inspired by the randomized coordinate descent (RCD) method proposed by Leventhal and Lewis \cite{D10} and its linear convergence rate analyzed theoretically \cite{MN15}, a lot of related work such as the randomized block versions \cite{LX15,NN17,RT14} and greedy randomized versions of the CD method \cite{BW19,ZG20} have been developed and studied. For more information about a variety of randomized versions of the coordinate descent method, see \cite{NS17,ST11,W15} and the references therein. These methods mentioned above are based on the CD method, and can be extended to Kaczmarz-type methods. The recent work of Kaczmarz-type method can be referred to \cite{GL20,DH19,XY21,ZJ19,BW199,YC19,CH21}. Inspired by the above work, We propose a new descent direction based on the construction idea of the CD method, which is formed by the weighting of two coordinate vectors. Based on this, we propose a Gauss-Seidel method with oblique direction (GSO) and construct the randomized version--randomized Gauss-Seidel method with oblique direction (RGSO), and analyze the convergence properties of the two methods.

Regarding our proposed methods--the GSO method and the RGSO method, we emphasize the efficiency when the columns of matrix $A$ are close to linear correlation. In \cite{NW13}, it is mentioned that when the rows of matrix $A$ are close to linear correlation, the convergence speed of the K method and the randomized Kaczmarz method \cite{SV09} decrease significantly. Inspired by the above phenomena, we experimented the convergence performance of the CD method and the RCD method when the columns of matrix A are close to linear correlation and it is found through experiments that the theoretical convergence speed and experimental convergence speed of the CD method and the the RCD method will be greatly reduced. The exponential convergence in expectation of the RCD method is as follows:
\begin{align}\label{e3}
E_k\delta(x^{(k+1)})\leq \left(1-\frac{1}{\kappa_F^2(A)}\right)\delta(x^{(k)}),
\end{align}
where $\delta(x)=F(x)-minF$, $F(x)=\|Ax-b\|^2$. Here and in the sequel, $||A||_2=\mathop{max}\limits_{\|x\|=1}||Ax||$, $||A||_F$, $\kappa_F(A)=||A||_F\cdot||A^ \dag||_2$ are used to denote Euclidean norm, Frobenius norm and the scaled condition number of the matrix $A$, respectively. The subgraph (a) in Figure \ref{f1} shows that when the column of matrix $A$ is closer to the linear correlation, $\kappa_F^2(A)$ will  become larger, which further reduce the convergence rate of the RCD method. The subgraph (b) in Figure \ref{f1} illustrates the sensitivity of the CD method and the RCD method to linear correlation column of $A$. This further illustrates the necessity of solving this type of problem, and the GSO method and the RGSO method we proposed can be used effectively to solve that one. For the initial data setting, explanation of the experiment in Figure \ref{f1} and the experiment on this type of matrix, please refer to Section 4 in this paper.
\begin{figure}\label{f1}
  \centering
  \subfigure[]{
    \includegraphics[width=2.15in]{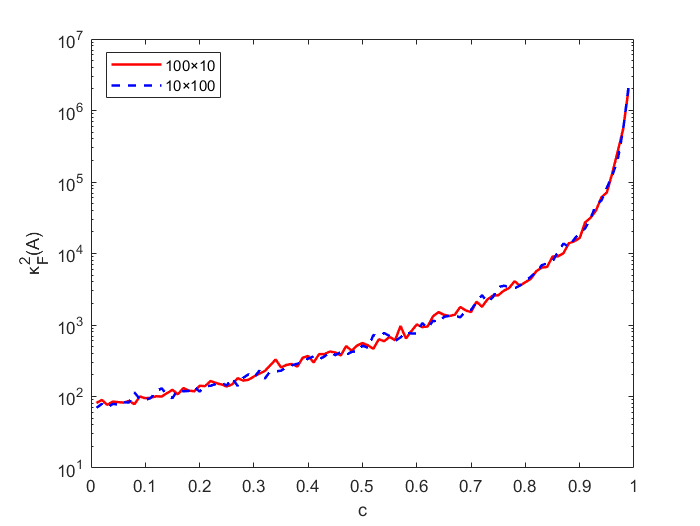}
  }
  \subfigure[]{
    \includegraphics[width=2.15in]{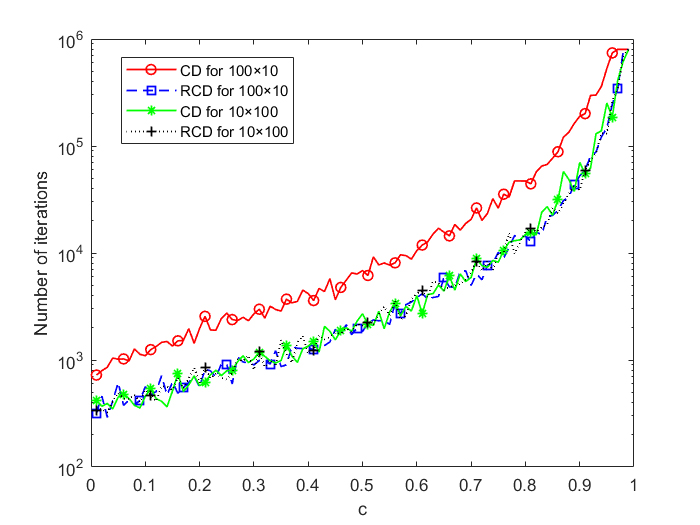}
  }
  \caption{Matrix $A$ is generated by the $rand$ function in the interval $[c,1]$.
   (a): $\kappa_F^2(A)$ of matrix $A$ changes with $c$.  (b): When the system is consistent, the number of iterations for the CD method and the RCD method to converge with the change of c, where the maximum number of iterations is limited to 800,000.}
\end{figure}

In this paper, $\langle \cdot \rangle$ stands for the scalar product, and we indicate by $e_i$ the column vector with 1 at the ith position and 0 elsewhere. In addition, for a given matrix $G=(g_{ij})\in R^{m\times n}$, $g_i^T$, $G_j$ and $\sigma_{min}(G)$, are used to denote its ith row, jth column and the smallest nonzero singular value of $G$ respectively. Given a symmetric positive semidefinite matrix $G\in R^{n\times n}$, for any vector $x\in R^n$ we define the corresponding seminorm as $||x||_G=\sqrt{x^TGx}$. Let $E_k$ denote the expected value conditonal on the first k iterations, that is,
\begin{equation*}
E_k[\cdot]=E[\cdot|j_0,j_1,...,j_{k-1}],
\end{equation*}
where $j_s(s=0,1,...,k-1)$ is the column chosen at the sth iteration.

The organization of this paper is as follows. In Section 2, we introduce the CD method and its construction idea. In Section 3, we propose the GSO method naturally and get its randomized version--RGSO method, and prove the convergence of the two methods. In Section 4, some numerical examples are provided to illustrate the effectiveness of our new methods. Finally, some brief concluding remarks are described in Section 5.

\section{Coordinate Descent Method}\label{sec:2}
\hspace{1.5em}Consider a linear system
\begin{align}\label{e4}
\tilde{A}x=b,
\end{align}
where the coefficient matrix $\tilde{A}\in R^{n\times n}$ is a positive-definite matrix, and $b\in R^n$ is a real m dimensional vector. $\tilde{x}^*=\tilde{A}^{-1}b$ is the unique solution of $(\ref{e4})$. In this case, solving $(\ref{e4})$ is equivalent to the following strict convex quadratic minimization problem
\begin{align*}
f(x)=\frac{1}{2}x^T\tilde{A}x-b^Tx.
\end{align*}
From \cite{D10}, the next iteration point $x^{(k+1)}$ is the solution to $\mathop{min}\limits_{\substack{t \in R}}f(x^{(k)}+td)$, i.e.
\begin{align}\label{e5}
x^{(k+1)}=x^{(k)}+\frac{(b-\tilde{A}x^{(k)})^{T}d}{d^T\tilde{A}d}d,
\end{align}
where $d$ is a nonzero direction, and $x^{(k)}$ is a current iteration point. It is easily proved that
\begin{align}\label{e6}
f(x^{(k+1)})-f(\tilde{x}^*)=\frac{1}{2} \|x^{(k+1)}-\tilde{x}^* \|^2_{\tilde{A}}=\frac{1}{2}\|x^{(k)}-\tilde{x}^*\|^2_{\tilde{A}}-\frac{((b-\tilde{A}x^{(k)})^Td)^2}{2d^T\tilde{A}d}.
\end{align}
One natural choice of a set of easily computable search directions is to choose $d$ by successively cycling through the set of canonical unit vectors $\left\{e_1,...,e_n\right\}$, where $e_i \in R^n,i=1,\cdots,n$. When $A\in R^{m\times n}$ is full column rank, we can apply $(\ref{e2})$ to $(\ref{e5})$ to get:
$$x^{(k+1)}=x^{(k)}+\frac{\langle r^{(k)},A_i\rangle}{||A_i||^2}e_i,$$
where $i=mod(k,n)+1$. This is the iterative formula of CD method, also known as Gauss-Seidel method. This method is linearly convergent but with a rate not easily expressible in terms of typical matrix quantities. See \cite{B96,GL96,QS07}. The CD method can only ensure that one entry of $A^Tr$ is 0 in each iteration, i.e. $A_i^Tr^{(k)}=0\ \ (i=mod(k,n)+1)$. In the next chapter, we propose a new oblique direction $d$ for $(\ref{e5})$, which is the weight of the two coordinate vectors, and use the same idea to get a new method-- the GSO method. The GSO method can ensure that the two entries of $A^Tr$ are 0 in each iteration, thereby accelerating accelerating the convergence.

{\bf Remark 1.} When $\tilde{A}$ is positive semidefinite matrix, $(\ref{e4})$ may not have a unique solution, replace $\tilde{x}^*$ with any least-squares solution, $(\ref{e5})$, $(\ref{e6})$ still hold, if $d^T\tilde{A}d\neq 0$.

{\bf Remark 2.} \ The Kaczmarz method can be regarded as a special case of $(\ref{e5})$ under a different regularizing linear system
\begin{align}\label{e7}
AA^Ty=b, \ \ x=A^Ty,
\end{align}
when $d$ is  selected cyclically  through the set of canonical unit vectors $\left\{e_1,...,e_m\right\}$, where $e_{i}\in R^m$, $i=1,2,\cdots,m$.

\section{Gauss-Seidel Method with Oblique Direction and its Randomized Version }
\subsection{Gauss-Seidel Method with Oblique Direction}
\hspace{1.5em}We propose a similar $d$, that is $d=e_{i_{k+1}}-\frac{\langle A_{i_{k+1}},A_{i_{k}}\rangle}{||A_{i_{k}}||^2}e_{i_{k}}$, where $e_{i}\in R^n$, $i=1,2,\cdots,n$. Using $(\ref{e2})$ and $(\ref{e5})$, we get
\begin{align}\label{e8}
\nonumber x^{(k+1)}&=x^{(k)}+
\frac{(A^Tb-A^TAx^{(k)})^T(e_{i_{k+1}}-\frac{\langle A_{i_{k+1}},A_{i_{k}}\rangle}{||A_{i_{k}}||^2}e_{i_{k}})}
{\left\|A(e_{i_{k+1}}-\frac{\langle A_{i_{k+1}},A_{i_{k}}\rangle}{||A_{i_{k}}||^2}e_{i_{k}})\right\|^2}(e_{i_{k+1}}-\frac{\langle A_{i_{k+1}},A_{i_{k}}\rangle}{||A_{i_{k}}||^2}e_{i_{k}})\\
&=x^{(k)}+\frac{A_{i_{k+1}}^Tr^{(k)}-\frac{\langle A_{i_{k+1}},A_{i_{k}}\rangle}{||A_{i_{k}}||^2}A_{i_{k}}^Tr^{(k)}}{||A_{i_{k+1}}||^2-\frac{\langle A_{i_{k+1}},A_{i_{k}}\rangle^2}{||A_{i_{k}}||^2}}(e_{i_{k+1}}-\frac{\langle A_{i_{k+1}},A_{i_{k}}\rangle}{||A_{i_{k}}||^2}e_{i_{k}}).
\end{align}
Now we prove that $A_{i_{k}}^Tr^{(k)}=0.$
\begin{align*}
A_{i_{k}}^Tr^{(k)}&=\langle A_{i_{k}},b-Ax^{(k)}\rangle\\
&=\langle A_{i_{k}},r^{(k-1)}\rangle-\langle A_{i_{k}},r^{(k-1)}\rangle+\frac{\langle A_{i_{k}},A_{i_{k-1}}\rangle}{||A_{i_{k-1}}||^2} A_{i_{k-1}}^Tr^{(k-1)}\\
&=\frac{\langle A_{i_{k}},A_{i_{k-1}}\rangle}{||A_{i_{k-1}}||^2}A_{i_{k-1}}^Tr^{(k-1)}, \ \ k=2,3,...
\end{align*}

We only need to guarantee $A_{i_{1}}^Tr^{(1)}=0$, so we need to take the simplest coordinate descent projection as the first step. $(\ref{e8})$ becomes
\begin{align}
\nonumber x^{(k+1)}=x^{(k)}+\frac{A_{i_{k+1}}^Tr^{(k)}}{||A_{i_{k+1}}||^2-\frac{\langle A_{i_{k+1}},A_{i_{k}}\rangle^2}{||A_{i_{k}}||^2}}(e_{i_{k+1}}-\frac{\langle A_{i_{k+1}},A_{i_{k}}\rangle}{||A_{i_{k}}||^2}e_{i_{k}}).
\end{align}

The algorithm is described in Algorithm $\ref{A3.1}$. Without losing generality, we assume that all columns of $A$ are not zero vectors.

\begin{breakablealgorithm}\label{A3.1}
\caption{Gauss-Seidel method with Oblique Projection (GSO)}
  \label{alg:Framwork2}
  \begin{algorithmic}[1]
    \Require
      $A\in R^{m\times n}$, $b\in R^{m}$, $x^{(0)}\in R^n$, $K$, $\varepsilon>0$
    \State For $i=1:n$, $N(i)=\|A_i\|^2$
    \State Compute $r^{(0)}=b-Ax^{(0)}$, $\alpha_0=\frac{\langle A_1,r^{(0)}\rangle}{N(1)}$, $x^{(1)}=x^{(0)}+\alpha_{0}e_1$, $r^{(1)}=r^{(0)}-\alpha_{0}A_1$, and set $i_{k+1}=1$
    \For {$k=1,2,\cdots, K-1$}
    \State Set $i_k=i_{k+1}$ and choose a new $i_{k+1}$: $i_{k+1}=mod(k,n)+1$
    \State Compute $G_{i_k}=\langle A_{i_{k}}, A_{i_{k+1}}\rangle$ \ and $g_{i_k}=N(i_{k+1})-\frac{G_{i_k}}{N(i_k)}G_{i_k}$
    \If {$g_{i_k}>\varepsilon$}
    \State Compute $\alpha_{k}=\frac{\langle A_{i_{k+1}},r^{(k)}\rangle}{g_{i_{k}}}$ \ and $\beta_k=-\frac{G(i_k)}{N(i_k)}\alpha_k$
    \State Compute $x^{(k+1)}=x^{(k)}+\alpha_ke_{i_{k+1}}+\beta_ke_{i_{k}}$, and $r^{(k+1)}=r^{(k)}-\alpha_kA_{i_{k+1}}-\beta_kA_{i_k}$
    \EndIf
    \EndFor
    \State Output $x^{(K)}$
  \end{algorithmic}
\end{breakablealgorithm}

It's easy to get
\begin{align*}
A_{i_{k-1}}^Tr^{(k)}&=A_{i_{k-1}}^T(r^{(k-1)}-\alpha_{k-1}A_{i_{k}}-\beta_{k-1}A_{i_{k-1}})\\
&=A_{i_{k-1}}^T(r^{(k-1)}-\frac{\langle A_{i_{k}},r^{(k-1)}\rangle}{g_{i_{k-1}}}A_{i_{k}}+\frac{\langle A_{i_{k-1}},A_{i_{k}}\rangle \langle A_{i_k},r^{(k-1)}\rangle}{||A_{i_{k-1}}||^2g_{i_{k}}}A_{i_{k-1}})\\
&=0.\ \ k=2,3,\cdots
\end{align*}
The last equality holds due to $A_{i_{k}}^Tr^{(k)}=0, k=1,2,\cdots$.
Before giving the proof of the convergence of the GSO method, we redefine the iteration point. For $x^{(0)}\in R^n$ as initial approximation, we define $x^{(0,0)}$, $x^{(0,1)},...,x^{(0,n)}\in R^n$ by
\begin{equation}\label{e3.2}
\left\{
             \begin{array}{lr}
             x^{(0,0)}=x^{(0)}+\frac{A_1^T(b-Ax^{(0)})}{||A_1||^2}e_1,&\\
             x^{(0,1)}=x^{(0,0)}+\frac{A_{2}^T(b-Ax^{(0,0)})}{||A_2||^2-\frac{\langle A_2,A_1\rangle^2}{||A_1||^2}}(e_2-\frac{\langle  A_{2},A_{1}\rangle}{||A_1||^2}e_1),&\\
             x^{(0,2)}=x^{(0,1)}+\frac{A_{3}^T(b-Ax^{(0,1)})}{||A_3||^2-\frac{\langle A_3,A_2\rangle^2}{||A_2||^2}}(e_3-\frac{\langle  A_{3},A_{2}\rangle}{||A_2||^2}e_2),&\\
             \cdots \cdots \cdots \cdots \cdots \cdots \cdots \cdots \cdots &\\
             x^{(0,n-1)}=x^{(0,n-2)}+\frac{A_{n}^T(b-Ax^{(0,n-2)})}{||A_n||^2-\frac{\langle A_n,A_{n-1}\rangle^2}{||A_{n-1}||^2}}(e_n-\frac{\langle  A_{n},A_{n-1}\rangle}{||A_{n-1}||^2}e_{n-1}),&\\
             x^{(0,n)}=x^{(0,n-1)}+\frac{A_{1}^T(b-Ax^{(0,n-1)})}{||A_1||^2-\frac{\langle A_1,A_{n}\rangle^2}{||A_{n}||^2}}(e_1-\frac{\langle  A_{1},A_{n}\rangle}{||A_{n}||^2}e_{n}).&\\
             \end{array}
\right.
\end{equation}
For convenience, denote $A_{n+1}=A_1,$ $b_{n+1}=b_1$. When the iteration point $x^{(p,n)}$ ${\forall p\geq0}$ is given, the iteration points are obtained continuously by the following formula
\begin{equation}\label{e3.3}
\left\{
             \begin{array}{lr}
             for \;\; i=1:n &  \\
             x^{(p+1,i)}=x^{(p+1,i-1)}+\frac{A_{i+1}^T(b-Ax^{(p+1,i-1)})}{||A_{i+1}||^2-\frac{\langle A_{i+1},A_{i}\rangle^2}{||A_i||^2}}(e_{i+1}-\frac{\langle  A_{i+1},A_{i}\rangle}{||A_{i}||^2}e_{i}),\\
             end
             \end{array}
\right.
\end{equation}
where $x^{(p+1,0)}=x^{(p,n)}$. Then, we can easily obtain that $x^{(k+1)}=x^{(p,i)}$, and $A_{i_k}^Tr^{(k)}=A_{i+1}^Tr^{(p,i)}=0$, if $k=p\cdot n+i,0\leq i<n.$

The convergence of the GSO is provided as follows.

\begin{theorem}\label{t3.1}
Consider $(\ref{e1})$, where the coefficient $A\in R^{m\times n}$, $b \in R^m$ is a given vector, and $\tilde{x}$ is any least-squares solution of $(\ref{e1})$. Let $x^{(0)}\in R^n$ be an arbitrary initial approximation, then the sequence $\{Ax^{(k)}\}_{k=1}^{\infty}$  generated by the GSO
algorithm is convergent , and satisfy the following equation:
\begin{equation}\label{e3.4}
\lim_{k\rightarrow\infty}\|x^{(k)}-\tilde{x}\|_{A^TA}= 0.
\end{equation}
\end{theorem}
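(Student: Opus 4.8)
The plan is to apply the exact one-step descent identity $(\ref{e6})$ along the GSO recursion and then turn the resulting monotonicity into convergence to a least-squares solution. Identity $(\ref{e6})$ is available here because, by Remark 1, it persists for the positive semidefinite matrix $\tilde A=A^TA$ and an arbitrary least-squares solution $\tilde x$ as long as the step denominator is nonzero. With $b-\tilde A x^{(k)}=A^Tr^{(k)}$ and the oblique direction $d_k=e_{i_{k+1}}-\frac{\langle A_{i_{k+1}},A_{i_{k}}\rangle}{\|A_{i_{k}}\|^2}e_{i_{k}}$, the numerator collapses to $(A^Tr^{(k)})^Td_k=A_{i_{k+1}}^Tr^{(k)}$ because $A_{i_{k}}^Tr^{(k)}=0$ was already established, while $d_k^T\tilde A d_k=\|Ad_k\|^2=g_{i_k}$. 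Hence $(\ref{e6})$ becomes
\begin{equation*}
\|x^{(k+1)}-\tilde x\|_{A^TA}^2=\|x^{(k)}-\tilde x\|_{A^TA}^2-\frac{(A_{i_{k+1}}^Tr^{(k)})^2}{g_{i_k}}.
\end{equation*}
Under the cyclic control only the $n$ pairs $(i_k,i_{k+1})$ occur, so $g_{i_k}$ ranges over finitely many values, each strictly positive (consecutive columns are not parallel) and each at most $\max_j\|A_j\|^2$; thus $0<g_{\min}\le g_{i_k}\le g_{\max}$.

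Second, I would read off convergence of the energy. The sequence $\{\|x^{(k)}-\tilde x\|_{A^TA}^2\}$ is nonincreasing and bounded below by $0$, hence convergent to some $L\ge 0$; summing the displayed decrements gives $\sum_k (A_{i_{k+1}}^Tr^{(k)})^2/g_{i_k}<\infty$, so $A_{i_{k+1}}^Tr^{(k)}\to 0$ and therefore the step sizes $\alpha_k\to 0$ and $\beta_k\to 0$.

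Third comes the crux: upgrading ``the single incoming residual component vanishes'' to ``the whole vector $A^Tr^{(k)}$ vanishes''. Fix a column index $j$. In every cycle there is exactly one substep at which $j$ is the incoming index, and right after it $A_j^Tr=0$ holds exactly; over the remaining at most $n-1$ substeps of that cycle the quantity $A_j^Tr$ is altered only by increments of the form $-\alpha_\ell\langle A_j,A_{i_{\ell+1}}\rangle-\beta_\ell\langle A_j,A_{i_\ell}\rangle$, each dominated by a fixed constant times $|\alpha_\ell|+|\beta_\ell|$. Since at most $n$ such increments separate any step from the last time component $j$ was zeroed, and since $\alpha_\ell,\beta_\ell\to 0$, I obtain $A_j^Tr^{(k)}\to 0$ for every fixed $j$; finite-dimensionality then yields $A^Tr^{(k)}\to 0$. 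I expect this drift-control argument to be the main obstacle, as it is where the oblique structure (two simultaneously annihilated components per step) must be combined with the vanishing of the step sizes.

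Finally I would convert this into $(\ref{e3.4})$. From $A^Tb=A^TA\tilde x$ we have $A^Tr^{(k)}=-A^TA(x^{(k)}-\tilde x)$; decomposing $x^{(k)}-\tilde x$ into its components in the row space of $A$ and in $\mathrm{null}(A)$ and using that $A^TA$ acts with smallest nonzero eigenvalue $\sigma_{min}^2(A)$ on the former gives
\begin{equation*}
\|x^{(k)}-\tilde x\|_{A^TA}^2\le \frac{1}{\sigma_{min}^2(A)}\|A^Tr^{(k)}\|^2\longrightarrow 0.
\end{equation*}
This forces $L=0$, which is exactly $(\ref{e3.4})$; and since $\|x^{(k)}-\tilde x\|_{A^TA}=\|Ax^{(k)}-A\tilde x\|$, the sequence $\{Ax^{(k)}\}$ converges to $A\tilde x$, as claimed.
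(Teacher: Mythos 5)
Your proof is correct, and while its skeleton (one-step descent identity from $(\ref{e6})$, monotone decrease of $\|x^{(k)}-\tilde x\|_{A^TA}^2$, vanishing of the incoming residual component, upgrade to $A^Tr^{(k)}\to 0$) matches the paper's, you close the argument by a genuinely different route. The paper works cycle-by-cycle with the reindexed iterates $x^{(p,i)}$: it shows consecutive residuals within a cycle have the same limit, deduces $A^Tr^{(p,i)}\to 0$, then uses boundedness of $\{Ax^{(p,0)}\}$ to extract a convergent subsequence $Ax^{(p_j,0)}\to\hat b$, identifies $A^T(b-\hat b)=0$ so that $\hat b=A\tilde x$, and finally transfers the subsequential limit to the whole sequence via monotonicity. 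You instead bypass compactness entirely with the spectral inequality $\|x^{(k)}-\tilde x\|_{A^TA}^2\le\sigma_{\min}^{-2}(A)\,\|A^TA(x^{(k)}-\tilde x)\|^2=\sigma_{\min}^{-2}(A)\,\|A^Tr^{(k)}\|^2$ (valid since $A(x^{(k)}-\tilde x)\in R(A)$ --- the same inequality the paper invokes later in Lemma~\ref{l3.1}), which both shortens the ending and gives a quantitative error bound in terms of $\|A^Tr^{(k)}\|$. Your intermediate ``drift-control'' step is likewise a more explicit, quantitative version of the paper's observation that $\lim_p r^{(p,i+1)}=\lim_p r^{(p,i)}$: you bound each component $A_j^Tr^{(k)}$ by at most $n$ increments of size $O(|\alpha_\ell|+|\beta_\ell|)$ since its last exact zeroing, which requires the uniform bounds $0<g_{\min}\le g_{i_k}\le g_{\max}$ available under cyclic control with non-parallel consecutive columns (the degenerate parallel case being excluded exactly as in the paper's Remark~3). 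Both arguments are sound; yours is arguably cleaner and more self-contained, while the paper's soft compactness argument avoids naming $\sigma_{\min}(A)$ at this stage.
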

\begin{proof}
According to $(\ref{e3.2})$-$(\ref{e3.3})$ we obtain the sequence of approximations (from top to bottom and left to right, and by also using the notational convention $x^{(p+1,0)}= x^{(p,n)}$).
\begin{equation*}
\left\{
             \begin{array}{lr}
             x^{(0)},x^{(0,0)}&  \\
             x^{(0,1)},x^{(0,2)},\cdots,x^{(0,n)}=x^{(1,0)} &  \\
             x^{(1,1)},x^{(1,2)},\cdots,x^{(1,n)}=x^{(2,0)} &  \\
             \cdots \cdots \cdots \cdots \cdots \cdots \cdots \cdots \cdots&\\
             x^{(p,1)},x^{(p,2)},\cdots,x^{(p,n)}=x^{(p+1,0)} &  \\
             \cdots \cdots \cdots \cdots \cdots \cdots \cdots \cdots \cdots
             \end{array}
\right.
\end{equation*}
Apply $(\ref{e2})$ to $(\ref{e6})$,we get
\begin{align}\label{e3.5}
||x^{(p,i+1)}-\tilde{x}||^2_{A^TA}=||x^{(p,i)}-\tilde{x}||^2_{A^TA}-\frac{((A^Tb-A^TAx^{(p,i)})^Td)^2}{d^TA^TAd},
\end{align}
where $d=e_{i+2}-\frac{\langle A_{i+2},A_{i+1}\rangle}{||A_{i+1}||^2}e_{i+1}$, $e_{i}\in R^n,i=1,\cdots,n.$
Obviously, the sequence $\{\|x^{(p,i)}-x^*\|_{A^TA}\}_{p=0, i=0}^{\infty, n-1}$, i.e. $\{\|x^{(k+1)}-x^*\|_{A^TA}\}_{k=1}^{\infty}$ is a monotonically decreasing sequence with lower bounds. There exists a $\alpha \ge 0 $ such that
\begin{equation}\label{e3.6}
  \lim_{p\rightarrow\infty}\|x^{(p,i)}-\tilde{x}\|_{A^TA}=\alpha\ge 0, \ \ \forall \ i=0,1,\cdots,n-1.
\end{equation}
Thus, take the limit of $p$ on both sides of $(\ref{e3.5})$, and because $i$ was arbitrary we apply $A_{i_{k+1}}^Tr^{(p,i)}=0$, and get
\begin{equation}\label{e3.7}
\lim_{p\rightarrow\infty}A_{i+2}^T r^{(p,i)}= 0,\ \ \forall \ i=0,1,\cdots,n-1.
\end{equation}
The residuals satisfy
$$r^{(p,i)}=r^{(p,i)}-\frac{\langle A_{i+2},r^{(p,i)}\rangle}{||A_{i+2}||^2-\frac{\langle A_{i+2},A_{i+1}\rangle^2}{||A_{i+1}||^2}}\left(A_{i+2}-\frac{\langle A_{i+2},A_{i+1}\rangle}{||A_{i+1}||^2}A_{i+1}\right).$$
Taking the limit of p on both sides of the above equation, we get
$$\lim_{p\rightarrow\infty}r^{(p,i+1)}=\lim_{p\rightarrow\infty}r^{(p,i)},\ \ \forall \ i=0,1,\cdots,n-1.$$
Using the above equation and $(\ref{e3.7})$, we can easily deduce that
\begin{equation}\label{e3.8}
  \lim_{p\rightarrow\infty}A^T r^{(p,i)}= 0,\ \ \forall \ i=0,1,\cdots,n-1.
\end{equation}
Because the sequence $\{\|x^{(p,i)}-\tilde{x}\|_{A^TA}\}_{p=0, i=0}^{\infty, n-1}$ is bounded, we obtain
\begin{equation}\label{e3.9}
  \|x^{(p,i)}\|_{A^TA}\le \|\tilde{x}\|_{A^TA}+\|x^{(p,i)}-\tilde{x}\|_{A^TA}\le \|\tilde{x}\|_{A^TA}+\|x^{(0,1)}-\tilde{x}\|_{A^TA},\,\, \forall p\ge 0.
\end{equation}
According to $(\ref{e3.9})$ we get that the sequence $\{Ax^{(p,0)}\}_{p=0}^{\infty}$ is bounded, thus there exists a convergent subsequence $\{Ax^{(p_j, 0)}\}_{j=1}^{\infty}$, let's denote it as
\begin{equation}\label{e3.10}
  \lim_{j\rightarrow\infty}Ax^{(p_j, 0)}=\hat{b}.
\end{equation}
From $(\ref{e3.2})$-$(\ref{e3.3})$, we get
\begin{equation*}
x^{(p_j,1)}=x^{(p_j,0)}-\frac{A^T_2(b-Ax^{(p_j,0)})}{||A_2||^2-\frac{\langle A_2,A_1\rangle^2}{||A_1||^2}}\left(e_2-\frac{\langle A_2,A_1\rangle}{||A_1||^2}e_1\right),\,\,\,\forall \ j>0.
\end{equation*}
By multiplying the both sides of the above equation left by matrix $A$ and using $(\ref{e3.7})$, we can get that
\begin{equation*}
  \lim_{j\rightarrow\infty}Ax^{(p_j, 1)}=\hat{b}.
\end{equation*}
With the same way we obtain
\begin{equation*}
  \lim_{j\rightarrow\infty}Ax^{(p_j,i)}=\hat{b},\,\,\,\,\forall \ i=0, 1,\cdots, n-1.
\end{equation*}
Then, from $(\ref{e3.8})$ we get for any $i=1, \cdots, n-1$,
\begin{equation*}
  \lim_{j\rightarrow\infty} A^Tr^{(p_j,i)}=A^T(b-\hat{b})=0.
\end{equation*}
From $(\ref{e3.6})$ and the above equation, we get
\begin{align*}
\lim_{j\rightarrow\infty}\|x^{(p_j,i)}-\tilde{x}\|_{A^TA}=\alpha=0, \ \  \ \forall \ i=0,1,\cdots,n-1
\end{align*}
Hence,
$$\lim_{p\rightarrow\infty}\|x^{(p,i)}-\tilde{x}\|_{A^TA}=0,  \ \  \ \forall \ i=0,1,\cdots,n-1$$
then $(\ref{e3.4})$ holds.
\end{proof}

{\bf Remark 3.} \ When $g_{i_{k}}=0$, $A_{i_{k+1}}$ is parallel to $A_{i_k}$, i.e. $\exists \lambda >0$, s.t. $A_{i_k}=\lambda A_{i_{k+1}}$. According to the above derivation, the GSO method is used to solve (1.2) which is consistent, so the following equation holds:
$$A_{i_k}^Tb=\lambda A_{i_{k+1}}^Tb,$$
 which means for $(\ref{e2})$ the $i_k$th equation: $\langle A_{i_{k}},Ax\rangle=A^T_{i_{k}}b$, and the $i_{k+1}$th equation: $\langle A_{i_{k+1}},Ax\rangle=A^T_{i_{k+1}}b$ are coincident,
and we can skip this step without affecting the final calculation to obtain the least-squares solution. When $g_{i_{k}}$ is too small, it is easy to produce large errors in the process of numerical operation, and we can regard it as the same situation as $g_{i_{k}}=0$ and skip this step.

{\bf Remark 4.} \ By the GSO method, we have: $||x^{(k+1)}-\tilde{x}||^2_{A^TA}=||x^{(k)}-\tilde{x}||^2_{A^TA}-\frac{( A_{i_{k+1}}^Tr^{k})^2}{g_{i_k}}$, where $g_{i_k}=\|A_{i_{k+1}}\|^2
-\frac{\langle A_{i_{k+1}},A_{i_k}\rangle^2}{\|A_{i_k}\|^2}$. But the CD method holds: $||x^{(k+1)}-\tilde{x}||^2_{A^TA}=||x^{(k)}-\tilde{x}||^2_{A^TA}-\frac{(A_{i_{k+1}}^Tr^{k})^2}{||A_{i_{k+1}}||^2}$. So the GSO method is faster than the CD method unless $\langle A_{i_k},A_{i_{k+1}}\rangle =0$. When $\langle A_{i_k},A_{i_{k+1}}\rangle =0$, the convergence rate of the GSO method is the same as that of the CD method. This means that when the coefficient matrix $A$ is a column orthogonal matrix, the GSO method degenerates to the CD method.

{\bf Remark 5.} \  The GSO method needs $8m+5$ floating-point operations per step, and the CD method needs $4m+1$ floating-point operations per step.

{\bf Remark 6.} \ When the matrix $A$ is full column rank, let $x^*$ be the unique least-squares solution of $(\ref{e1})$, the sequence $\{x^{(k)}\}_{k=1}^{\infty}$  generated by the GSO
method holds: $\lim\limits_{k\rightarrow\infty}\|x^{(k)}-x^*\|_{A^TA}=0$, that is, $\lim\limits_{k\rightarrow\infty}\|A(x^{(k)}-x^*)\|^2=0$. Therefore,
$$\lim_{k\rightarrow\infty}\|x^{(k)}-x^*\|^2=0.$$

\begin{example}\label{EX3.1}\upshape
Consider the following systems of linear equations
\begin{equation}\label{e3.11}
\left\{
\begin{array}{rrr}
5x_1+45x_2&=&50,\\
9x_1+80x_2&=&89,
\end{array}
\right.
\end{equation}
\begin{equation}\label{e3.12}
\left\{
\begin{array}{rrr}
x_1+11x_2&=& 12,\\
-2x_1-21x_2&=&-23,\\
3x_1+32x_2&=&35
\end{array}
\right.
\end{equation}
and
\begin{equation}\label{e3.13}
\left\{
\begin{array}{rrr}
x_1+9x_2&=&0,\\
4x_1+36x_2&=&42.5,\\
13x_1+118x_2&=&131,
\end{array}
\right.
\end{equation}
\end{example}
$(\ref{e3.11})$ is square and consistent, $(\ref{e3.12})$ is overdetermined and consistent, and $(\ref{e3.13})$ is overdetermined and inconsistent. Vector $x^*=(1,1)^T$ is the unique solution to the above $(\ref{e3.11})$ and $(\ref{e3.12})$, is the unique least-squares solution to $(\ref{e3.13})$. It can be found that the column vectors of these systems are close to linearly correlated. Numerical experiments show that they take $650259$, $137317$, $3053153$ steps respectively for the CD method to be applied to the above systems to reach the relative solution error requirement $\frac{\|x^{(k)}-x^*\|^2}{\|x^*\|^2}\leq\frac{1}{2}\times 10^{-6}$, but the GSO method can find the objective solutions to the above three systems in one step.
\subsection{Randomized Gauss-Seidel  Method with Oblique Direction}
\hspace{1.5em}If the columns whose residual entries are not 0 in algorithm \ref{A3.1} are selected uniformly and randomly, we get a randomized Gauss-seidel method with oblique direction (RGSO) and its convergence as follows.
 \begin{breakablealgorithm}\label{A3.2}
\caption{Randomized Gauss-Seidel Method with Oblique Direction (RGSO)}
  \label{alg:Framwork4}
  \begin{algorithmic}[1]
    \Require
      $A\in R^{m\times n}$, $b\in R^{m}$, $x^{(0)}\in R^n$, $K$, $\varepsilon>0$
    \State For $i=1:n$, $N(i)=\|A_i\|^2$
    \State Randomly select $i_1$, and compute $r^{(0)}=b-Ax^{(0)}$, $\alpha_0=\frac{\langle A_{i_{1}},r^{(0)}\rangle}{||A_{i_{1}}||^2}$, and $x^{(1)}=x^{(0)}+\alpha_0 e_{i_{1}}$
    \State Randomly select $i_2\neq i_1$, and compute $r^{(1)}=r^{(0)}-\alpha_0A_{i_{1}}$, $\alpha_1=\frac{\langle A_{i_{2}},r^{(1)}\rangle}{||A_{i_{2}}||^2-\frac{\langle A_{i_{1}},A_{i_{2}}\rangle^2}{||A_{i_{1}}||^2}}$, and $\beta_1=-\frac{\langle A_{i_{1}},A_{i_{2}}\rangle}{||A_{i_{1}}||^2}\alpha_1$
    \State Compute $x^{(2)}=x^{(1)}+\alpha_1e_{i_{2}}+\beta_1e_{i_{1}}$,and $r^{(2)}=r^{(1)}-\alpha_1A_{i_2}-\beta_1A_{i_1}$
    \For {$k=2,3,\cdots, K-1$}
    \State Randomly select $i_{k+1}$ $(i_{k+1}\neq i_k,i_{k-1})$
    \State Compute $G_{i_k}=\langle A_{i_{k}}, A_{i_{k+1}}\rangle$ \ and $g_{i_k}=N(i_{k+1})-\frac{G_{i_k}}{N(i_k)}G_{i_k}$
    \If {$g_{i_k}>\varepsilon$}
    \State Compute $\alpha_{k}=\frac{\langle A_{i_{k+1}},r^{(k)}\rangle}{g_{i_{k}}}$ \ and $\beta_k=-\frac{G(i_k)}{N(i_k)}\alpha_k$
    \State Compute $x^{(k+1)}=x^{(k)}+\alpha_ke_{i_{k+1}}+\beta_ke_{i_{k}}$, and $r^{(k+1)}=r^{(k)}-\alpha_kA_{i_{k+1}}-\beta_kA_{i_k}$
    \EndIf
    \EndFor
    \State Output $x^{(K)}$
  \end{algorithmic}
\end{breakablealgorithm}

\begin{lemma}\label{l3.1}
Consider $(\ref{e1})$, where the coefficient $A\in R^{m\times n}$ , $b \in R^m$ is a given vector, and $\tilde{x}$ is any  solution to $(\ref{e1})$ , then we obtain the bound on the following expected conditional on the first $k$ $(k\geq 2)$ iteration of the RGSO
   $$E_k\frac{(A_{i_{k+1}}^Tr^{(k)})^2}{g_{i_{k}}}\geq \frac{1}{n-2}\frac{\sigma_{min}^2(A)||\tilde{x}-x^{(k)}||^2_{A^TA}}{||A||^2_F-\sigma_{min}^2(A)}.$$
\end{lemma}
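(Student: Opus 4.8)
The plan is to evaluate the conditional expectation explicitly, replace the (random) denominator by a single uniform upper bound, recognize the surviving numerator sum as $\|A^Tr^{(k)}\|^2$, and then bound that quantity from below. Since at the $k$-th step of the RGSO ($k\ge 2$) the index $i_{k+1}$ is drawn uniformly from the $n-2$ indices different from $i_k$ and $i_{k-1}$, I would first write
\begin{equation*}
E_k\frac{(A_{i_{k+1}}^Tr^{(k)})^2}{g_{i_k}}=\frac{1}{n-2}\sum_{j\neq i_k,i_{k-1}}\frac{(A_j^Tr^{(k)})^2}{\|A_j\|^2-\frac{\langle A_j,A_{i_k}\rangle^2}{\|A_{i_k}\|^2}}.
\end{equation*}
The first observation is that, exactly as in the derivation preceding Theorem \ref{t3.1}, the iterates satisfy $A_{i_k}^Tr^{(k)}=0$ and $A_{i_{k-1}}^Tr^{(k)}=0$; this is precisely why these two indices are excluded from the draw. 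Hence the two absent terms would contribute nothing, and $\sum_{j\neq i_k,i_{k-1}}(A_j^Tr^{(k)})^2=\sum_{j=1}^n(A_j^Tr^{(k)})^2=\|A^Tr^{(k)}\|^2$.

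The main work is a uniform upper bound on the denominators. I would read $g_{i_k}(j):=\|A_j\|^2-\frac{\langle A_j,A_{i_k}\rangle^2}{\|A_{i_k}\|^2}=\|PA_j\|^2$, where $P=I-\frac{A_{i_k}A_{i_k}^T}{\|A_{i_k}\|^2}$ is the orthogonal projection onto the complement of $\mathrm{span}(A_{i_k})$. Because every summand of $\|PA\|_F^2=\sum_l\|PA_l\|^2$ is nonnegative, each individual term obeys $g_{i_k}(j)\le\|PA\|_F^2$. A trace computation using $P^TP=P$ and $I-P=\frac{A_{i_k}A_{i_k}^T}{\|A_{i_k}\|^2}$ then gives
\begin{equation*}
\|PA\|_F^2=\mathrm{tr}(A^TPA)=\|A\|_F^2-\frac{\|A^TA_{i_k}\|^2}{\|A_{i_k}\|^2}.
\end{equation*}

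The crux is the elementary singular-value inequality $\|A^TAv\|^2\ge\sigma_{min}^2(A)\|Av\|^2$, valid for every $v$, which I would prove by splitting $v$ into its components in the row space and null space of $A$ and noting that $A^TA$ acts on the row space with eigenvalues $\ge\sigma_{min}^2(A)$. Applied with $v=e_{i_k}$ (so $Ae_{i_k}=A_{i_k}\neq 0$), it yields $\frac{\|A^TA_{i_k}\|^2}{\|A_{i_k}\|^2}\ge\sigma_{min}^2(A)$, hence $g_{i_k}(j)\le\|A\|_F^2-\sigma_{min}^2(A)$ for every admissible $j$. Substituting this uniform bound into the expectation and using the numerator identity gives
\begin{equation*}
E_k\frac{(A_{i_{k+1}}^Tr^{(k)})^2}{g_{i_k}}\ge\frac{1}{n-2}\cdot\frac{\|A^Tr^{(k)}\|^2}{\|A\|_F^2-\sigma_{min}^2(A)}.
\end{equation*}

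To finish, since $\tilde{x}$ is a least-squares solution we have $A^Tb=A^TA\tilde{x}$, so $A^Tr^{(k)}=A^TA(\tilde{x}-x^{(k)})$, and a second application of the same singular-value inequality (now with $v=\tilde{x}-x^{(k)}$) gives $\|A^Tr^{(k)}\|^2\ge\sigma_{min}^2(A)\|\tilde{x}-x^{(k)}\|_{A^TA}^2$; inserting this produces exactly the claimed bound. The step I expect to be most delicate is the uniform denominator bound: one must spot the projection picture, pass from a single term to the full Frobenius norm of $PA$, execute the trace identity, and then invoke the singular-value inequality — the very inequality that reappears at the end to control $\|A^Tr^{(k)}\|^2$. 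Throughout I assume the nondegenerate case $\|A\|_F^2>\sigma_{min}^2(A)$; when equality holds the matrix has rank one, all columns are parallel, and the step is skipped as in Remark 3.
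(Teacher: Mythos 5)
Your proof is correct and follows the same skeleton as the paper's: expand the conditional expectation over the $n-2$ admissible indices, use $A_{i_k}^Tr^{(k)}=A_{i_{k-1}}^Tr^{(k)}=0$ to recognize the numerator sum as $\|A^Tr^{(k)}\|^2=\|A^TA(\tilde{x}-x^{(k)})\|^2$, identify $\sum_{s}\bigl(\|A_s\|^2-\langle A_s,A_{i_k}\rangle^2/\|A_{i_k}\|^2\bigr)=\|A\|_F^2-\|A^TA_{i_k}\|^2/\|A_{i_k}\|^2$, and invoke $\|A^Tz\|^2\ge\sigma_{min}^2(A)\|z\|^2$ for $z\in R(A)$ twice. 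The one place you genuinely diverge is the central lower bound on the sum of ratios: the paper applies the mediant-type inequality $\sum_s\frac{b_s}{a_s}\ge\frac{\sum_s b_s}{\sum_s a_s}$ and only afterwards relaxes the aggregated denominator, whereas you first bound each individual denominator uniformly by $\|PA\|_F^2\le\|A\|_F^2-\sigma_{min}^2(A)$ via the projection $P=I-A_{i_k}A_{i_k}^T/\|A_{i_k}\|^2$ and then sum. The two routes land on the identical constant, but yours has a small advantage in rigor: the paper's parenthetical claim that the restricted ratio \emph{equals} the ratio of full sums is not literally an equality (the $s=i_{k-1}$ term adds a generally nonzero amount to the denominator; the correct statement is another ``$\ge$''), and your uniform-bound argument sidesteps that issue entirely since adding nonnegative terms to $\|PA\|_F^2$ only strengthens the upper bound you need. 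Your closing remark on the degenerate case $\|A\|_F^2=\sigma_{min}^2(A)$ (rank one, step skipped as in Remark 3) is a sensible addition the paper omits.
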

\begin{proof}
For the RGSO mthod, it is easy to get that $A_{i_{k}}^Tr^{(k)}=0\ \ (k=1,2,\cdots )$ and $A_{i_{k-1}}^Tr^{(k)}=0\ \ (k=2,3,\cdots)$ are still valid.
\begin{align*}
E_k\frac{(A_{i_{k+1}}^Tr^{(k)})^2}{g_{i_{k}}}&=\frac{1}{n-2}\sum\limits_{\substack{s=1\\s\not=i_k,i_{k-1}}}^n \frac{(A_s^Tr^{(k)})^2}{||A_s||^2-\frac{\langle A_s,A_{i_{k}}\rangle^2}{||A_{i_k}||^2}}\\
&\geq\frac{1}{n-2}\frac{\sum\limits_{\substack{s=1,s\not=i_k,i_{k-1}}}^n(A_s^Tr^{(k)})^2}{\sum\limits_{\substack{s=1\\s\not=i_k,i_{k-1}}}^n(||A_s||^2-\frac{\langle A_s,A_{i_{k}}\rangle^2}{||A_{i_{k}}||^2})}\left(=\frac{1}{n-2}\frac{\sum\limits_{\substack{s=1}}^n(A_s^Tr^{(k)})^2}{\sum\limits_{\substack{s=1}}^n(||A_s||^2-\frac{\langle A_s,A_{i_{k}}\rangle^2}{||A_{i_{k}}||^2})}\right)\\
&=\frac{1}{n-2}\frac{||A^TA(\tilde{x}-x^{(k)})||^2}{||A||^2_F-\frac{||A^TA_{i_{k}}||^2}{||A_{i_{k}}||^2}}\\
&\geq \frac{1}{n-2}\frac{\sigma_{min}^2(A)||\tilde{x}-x^{(k)}||^2_{A^TA}}{||A||^2_F-\sigma_{min}^2(A)},\ \ k=2,3,...
\end{align*}
The first inequality uses the conclusion of $\frac{|b_1|}{|a_1|}+\frac{|b_2|}{|a_2|}\geq\frac{|b_1|+|b_2|}{|a_1|+|a_2|}$ (if $|a_1|>0$, $|a_2|>0$), and the second one uses the conclusion of $\|A^Tz\|_2^2\geq\sigma_{min}^2(A)\|z\|_2^2$, if $z\in R(A)$.
\end{proof}
\begin{theorem}\label{t3.2}
Consider $(\ref{e1})$, where the coefficient $A\in R^{m\times n}$ , $b \in R^m$ is a given vector, and $\tilde{x}$ is any least-squares solution of $(\ref{e1})$. Let $x^{(0)}\in R^n$ be an arbitrary initial approximation,
and define the least-squares residual and error by
  $$F(x)=||Ax-b||^2,$$
  $$\delta(x)=F(x)-minF,$$
then the RGSO method is linearly convergent in expectation to a solution in $(\ref{e1})$.

For each iteration:$k=2,3,...$,
\begin{align*}
E_k\delta(x^{(k+1)})\leq\left(1-\frac{1}{(n-2)(k_F^2(A)-1)} \right) \delta(x^{(k)}).
\end{align*}
In particular, if $A$ has full column rank, we have the equivalent property
\begin{align*}
E_k\left[\|x^{(k+1)}-x^*\|^2_{A^TA}\right] \leq \left(1-\frac{1}{(n-2)(k_F^2(A)-1)} \right)\|x^{(k)}-x^*\|^2_{A^TA},
\end{align*}
where $x^*=A^{\dag} b=(A^TA)^{-1}A^Tb$ is the unique least-squares solution.
\end{theorem}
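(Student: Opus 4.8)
The plan is to reduce the probabilistic statement to the deterministic one-step identity already recorded in Remark 4, combined with the expectation bound of Lemma \ref{l3.1}, after first translating the objective gap $\delta$ into the $A^TA$-seminorm error in which both of those results are phrased. Concretely, I would proceed in four steps: (i) establish the exact identity $\delta(x)=\|x-\tilde{x}\|^2_{A^TA}$ for every least-squares solution $\tilde{x}$; (ii) substitute this into the per-step decrease; (iii) take the conditional expectation $E_k$ and invoke Lemma \ref{l3.1}; (iv) rewrite the resulting contraction factor in terms of $\kappa_F(A)$.

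For step (i), I would write $b=A\tilde{x}+(b-A\tilde{x})$ and use that $\tilde{x}$ solves the normal equation $A^TA\tilde{x}=A^Tb$, so $A^T(b-A\tilde{x})=0$. Expanding $F(x)=\|A(x-\tilde{x})-(b-A\tilde{x})\|^2$ then makes the cross term $\langle A(x-\tilde{x}),\,b-A\tilde{x}\rangle=(x-\tilde{x})^TA^T(b-A\tilde{x})$ vanish, leaving $F(x)=\|x-\tilde{x}\|^2_{A^TA}+\|b-A\tilde{x}\|^2$. Since $\min F=\|b-A\tilde{x}\|^2$, this yields $\delta(x)=\|x-\tilde{x}\|^2_{A^TA}$, an identity that is independent of the particular choice of $\tilde{x}$.

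For steps (ii)--(iii), Remark 4 supplies the deterministic equality
\[
\|x^{(k+1)}-\tilde{x}\|^2_{A^TA}=\|x^{(k)}-\tilde{x}\|^2_{A^TA}-\frac{(A_{i_{k+1}}^Tr^{(k)})^2}{g_{i_k}},
\]
so by step (i) this reads $\delta(x^{(k+1)})=\delta(x^{(k)})-(A_{i_{k+1}}^Tr^{(k)})^2/g_{i_k}$. Applying $E_k$ and the lower bound of Lemma \ref{l3.1} then gives
\[
E_k\delta(x^{(k+1)})\le\delta(x^{(k)})-\frac{1}{n-2}\,\frac{\sigma_{min}^2(A)\,\|\tilde{x}-x^{(k)}\|^2_{A^TA}}{\|A\|^2_F-\sigma_{min}^2(A)}.
\]
Replacing $\|\tilde{x}-x^{(k)}\|^2_{A^TA}$ by $\delta(x^{(k)})$ via step (i) and factoring out $\delta(x^{(k)})$ produces the contraction factor $1-\frac{1}{n-2}\frac{\sigma_{min}^2(A)}{\|A\|^2_F-\sigma_{min}^2(A)}$.

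For step (iv), I would use $\|A^\dag\|_2=1/\sigma_{min}(A)$, hence $\kappa_F^2(A)=\|A\|_F^2/\sigma_{min}^2(A)$ and $\kappa_F^2(A)-1=(\|A\|_F^2-\sigma_{min}^2(A))/\sigma_{min}^2(A)$; substituting shows the factor equals $1-\frac{1}{(n-2)(\kappa_F^2(A)-1)}$, which is the claimed inequality. The full-rank statement is then immediate: when $A$ has full column rank, $\tilde{x}=x^*$ is the unique solution, so step (i) gives $\delta(x^{(k)})=\|x^{(k)}-x^*\|^2_{A^TA}$ and the two displayed inequalities coincide. I do not expect a genuine obstacle here, since Lemma \ref{l3.1} already carries the probabilistic heart of the argument; the only care needed is the bookkeeping in step (i) --- in particular verifying that $\delta(x)=\|x-\tilde{x}\|^2_{A^TA}$ holds for \emph{any} least-squares solution $\tilde{x}$, not merely in the full-rank case, so that the seminorm error inherited from Remark 4 and Lemma \ref{l3.1} is correctly identified with the objective gap.
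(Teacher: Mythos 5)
Your proposal is correct and follows essentially the same route as the paper: the authors likewise reduce to the one-step identity $\|x^{(k+1)}-\tilde{x}\|^2_{A^TA}=\|x^{(k)}-\tilde{x}\|^2_{A^TA}-(A_{i_{k+1}}^Tr^{(k)})^2/g_{i_k}$ (obtained from (6) with the oblique direction and $A_{i_k}^Tr^{(k)}=0$, i.e.\ the content of Remark 4), identify $\delta(x)=\|x-\tilde{x}\|^2_{A^TA}$, take $E_k$, apply Lemma \ref{l3.1}, and rewrite the factor via $\kappa_F^2(A)=\|A\|_F^2/\sigma_{min}^2(A)$. Your step (i) merely spells out what the paper dismisses as "easy to prove."
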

\begin{proof}
It is easy to prove that
$$F(x)-F(\tilde{x})=||x-\tilde{x}||^2_{A^TA}=\delta(x).$$
Apply $(\ref{e2})$ to $(\ref{e6})$ with $d=e_{i_{k+1}}-\frac{\langle A_{i_{k+1}},A_{i_{k}}\rangle}{||A_{i_k}||^2}e_{i_k}$ and $A_{i_{k}}^Tr_k=0,k=1,2,..$ ,we get that
\begin{align*}
F(x^{(k+1)})-F(\tilde{x})&=||x^{(k+1)}-\tilde{x}||^2_{A^TA}\\
&=||x^{(k)}-\tilde{x}||^2_{A^TA}-\frac{(A^T_{i_{k+1}}r^{(k)})^2}{g_{i_{k}}}.
\end{align*}
Making conditional expectation on both sides, and applying Lemma $\ref{l3.1}$, we get
\begin{align*}
E_k\left[F(x^{(k+1)})-F(\tilde{x})\right]&=||x^{(k)}-\tilde{x}||^2_{A^TA}-E_k\left[\frac{(A_{i_{k+1}}^Tr^{(k)})^2}{g_{i_{k}}}\right]\\
&\leq ||x^{(k)}-\tilde{x}||^2_{A^TA}-\frac{\sigma_{min}^2(A)||\tilde{x}-x^{(k)}||^2_{A^TA}}{(n-2)(||A||^2_F-\sigma_{min}^2(A))},
\end{align*}
that is
\begin{align*}
E_k\delta(x^{(k+1)})&\leq \left(1-\frac{\sigma_{min}^2(A)}{(n-2)(||A||_F^2-\sigma_{min}^2(A))}\right)\delta(x^{(k)})\\
&=\left(1-\frac{1}{(n-2)(k_F^2(A)-1)} \right) \delta(x^{(k)}).
\end{align*}
If A has full column rank, the solution in $(\ref{e1})$ is unique and the $\tilde{x}=x^*$. Thus, we get
\begin{align*}
E_k\left[\|x^{(k+1)}-x^*\|^2_{A^TA}\right] \leq \left(1-\frac{1}{(n-2)(k_F^2(A)-1)} \right)\|x^{(k)}-x^*\|^2_{A^TA}.
\end{align*}
\end{proof}

{\bf Remark 7.} \ In particular, after unitizing the columns of matrix A, we can get from Lemma $\ref{l3.1}$:
\begin{align*}
E_k\frac{(A_{i_{k+1}}^Tr^{(k)})^2}{g_{i_{k}}}&=\frac{1}{n-2}\sum\limits_{\substack{s=1\\s\not=i_k,i_{k-1}}}^n \frac{(A_s^Tr^{(k)})^2}{||A_s||^2-\frac{\langle A_s,A_{i_{k}}\rangle^2}{||A_{i_k}||^2}}\\
&=\sum\limits_{\substack{s=1\\s\not=i_k,i_{k-1}}}^n \frac{\|A_s\|^2}{\|A\|^2_F-2}\frac{(A_s^Tr^{(k)})^2}{||A_s||^2-\frac{\langle A_s,A_{i_{k}}\rangle^2}{||A_{i_k}||^2}}\\
&\geq\sum\limits_{\substack{s=1\\s\not=i_k,i_{k-1}}}^n \frac{1}{\|A\|^2_F-2}\frac{(A_s^Tr^{(k)})^2}{1-\gamma_{i_{k}}^2}\\
&\geq\frac{\sigma_{min}^2(A)||\tilde{x}-x^{(k)}||^2_{A^TA}}{(1-\gamma_{i_{k}}^2)(\|A\|^2_F-2)},k=2,3,...
\end{align*}
where $ \gamma_{i_{k}}=\mathop{min}\limits_{s\neq i_{k},i_{k-1}}|\langle A_s, A_{i_{k}}\rangle|$. Then we get from Theorem \ref{t3.1}:
\begin{align*}
E_k\delta(x^{(k+1)})&\leq \left(1-\frac{\sigma_{min}^2(A)}{(1-\gamma^2_{i_{k}})(||A||_F^2-2)}\right)\delta(x^{(k)}).
\end{align*}
Comparing the above equation with $(\ref{e3})$, we can get that under the condition of column unitization, the RGSO method is theoretically faster than the RCD method. Note that by Remark 3, we can avoid the occurrence of $ \gamma_{i_{k}}=1$.

\section{Numerical Experiments}
\hspace{1.5em}In this section, some numerical examples are provided to illustrate the effectiveness of the coordinate descent (CD) method, the Gauss-Seidel method with oblique direction (GSO), the randomized coordinate descent (RCD) method (with uniform probability) and the randomized Gauss-Seidel method with oblique direction (RGSO) for solving $(\ref{e1})$. All experiments are carried out using MATLAB (version R2019b) on a personal computer with 1.60 GHz central processing unit (Intel(R) Core(TM) i5-10210U CPU), 8.00 GB memory, and Windows operating system (64 bit Windows 10).

Obtained from \cite{PP12}, the least-squares solution set for $(\ref{e1})$ is
\begin{align*}
LSS(A;b)=S(A,b_A)=\{P_{N(A)}(x^{(0)})+x_{LS},x^{(0)}\in R^n\},
\end{align*}
where $LSS(A;b)$ is the set of all least solutions to $(\ref{e1})$, and $x_{LS}$ is the unique least-squares solution of minimal Euclidean norm. For the consistent case $b\in R(A)$, $LSS(A;b)$ will be denoted by $S(A;b)$. If $b=b_A+b_A^*$, with
$$b_A=P_{R(A)}(b),b_A^*=P_{N(A^T)}(b),$$
where $P_s$ denotes the orthogonal projection onto the vector subspace $S$ of some $R^q$. From Theorem $\ref{t3.1}$ and Theorem $\ref{t3.2}$, we can know that the sequence$\|x^{(k)}-\tilde{x}\|^2_{A^TA}$ generated by the GSO method and the RGSO method converges to $0$. Due to
\begin{align*}
\|x^{(k)}-\tilde{x}\|^2_{A^TA}&=\|A(x^{(k)}-\tilde{x})\|^2\\
&=\|b_A+b_A^*-r^{(k)}-b_A\|^2\\
&=\|b_A^*-r^{(k)}\|^2,
\end{align*}
where $b_A^*$ can be known in the experimental hypothesis, and $r^{(k)}$ is calculated in the iterative process, we can propose a iteration termination rule: The methods are terminated once residual relative error  ($RRE$), defined by $$RRE=\frac{\|b_A^*-r^{(k)}\|^2}{\|b\|^2}$$ at the current iterate $x^{(k)}$, satisfies $RRE<\frac{1}{2}\times10^{-6}$ or the maximum iteration steps $500,000$ being reached. If the number of iteration steps exceeds $500,000$, it is denoted as "-". IT and CPU are the medians of the required iterations steps and the elapsed CPU times with respect to $50$ times repeated runs of the corresponding method. To give an intuitive demonstration of the advantage, we define the speed-up as follows:
 $$\operatorname{speed-up}^{1}=\frac{\operatorname{CPU\ of\ CD}}{\operatorname{CPU\ of\ GSO}}, \operatorname{speed-up}^{2}=\frac{\operatorname{CPU\ of\ RGS}}{\operatorname{CPU\ of\ RGSO}}.$$

In our implementations, all iterations are started from the initial guess $x_0=zeros(n,1)$.
First, set a least-squares solution $\tilde{x}$, which is generated by using the MATLAB function rand.
Then set $b_A=A\tilde{x}$. When linear system is consistent, $b_A^*=0$, $b=b_A$, else $b_A^*\in null(A^T)$, $b=b_A+b_A^*$. When the column of the coefficient matrix A is full rank, the methods can converge to the only least-squares solution $x^*$ under the premise of convergence.

\subsection{Experiments for Random Matrix Collection in $[0,1]$}
\hspace{1.5em}The random matrix collection in $[0,1]$ is randomly generated by using the MATLAB function $rand$, and the numerical results are reported in Tables 1-9. According to the characteristics
of the matrix generated by  MATLAB function $rand$, Table \ref{table1} to Table \ref{table3}, Table \ref{table4} to Table \ref{table6}, Table \ref{table7} to Table \ref{table9} are  the
experiments respectively for the overdetermined consistent linear systems, overdetermined inconsistent linear systems, and underdetermined consistent linear systems. In Table \ref{table1}
 to Table \ref{table6}, under the premise of convergence, all methods can find the unique least-squares solution $x^*$, i.e. $x^*=(A^TA)^{-1}A^Tb$. In Table \ref{table7} to Table \ref{table9},
 all methods can find the least-squares solution under the premise of convergence, but they can't be sure to find the same least-squares solution.

From these tables, we see that the GSO method and the RGSO method are more outstanding than the CD method and the RCD method respectively in terms of both IT and CPU  with significant speed-up, regardless of whether the corresponding linear system is consistent or inconsistent. We can observe that in Tables 1-6, for the overdetermined linear systems, whether it is consistent or inconsistent, CPU and IT of all methods increase with the increase of $n$, and the CD method is extremely sensitive to the increase of $n$. When n increases to 100, it stops because it exceeds the maximum number of iterations. In Tables 7-9, for the underdetermined  consistent linear system, CPU and IT of all methods increase with the increase of $m$.
\begin{table}[!htbp]
\centering
\caption{IT and CPU of CD, RCD, GSO and RGSO for $m \times n$ matrices $A$ with $n = 50$ and different $m$ when the overdetermined linear system is consistent}
\label{table1}
\renewcommand\arraystretch{0.75}
\begin{tabular}{cccccccc}
\toprule
\multicolumn{2}{c}{ $ m \times n $ }& $1000 \times 50$ & $2000 \times 50 $ & $3000 \times 50$ & $4000 \times 50$ & $5000 \times 50$ \\
\hline
\multirow{2}*{CD}   & IT          &73004	&74672	&74335	&74608	&74520\\
                     & CPU       &0.1605 	&0.3082 	&0.5200 	&0.9833 	&1.3256 \\
\hline
\multirow{2}*{GSO}  & IT          &11110	&11081	&10915	&10951	&10934\\
                     & CPU        &0.0379 	&0.0711 	&0.1224 	&0.2412 	&0.3244\\
\hline
\multicolumn{2}{c}{ speed-up$^1$ }   &4.23 	&4.33 	&4.25 	&4.08 	&4.09 \\
\hline
\multirow{2}*{RCD}  & IT          &1733	&1596	&1505	&1583	&1522\\
                     & CPU         &0.0125 	&0.0151 	&0.0196 	&0.0322 	&0.0416\\
\hline\multirow{2}*{RGSO}  & IT      &778	&752	&789	&700	&685 \\
                     & CPU    &0.0070 	&0.0086 	&0.0145 	&0.0210 	&0.0267 \\
\hline
\multicolumn{2}{c}{ speed-up$^2$ }    &1.78 	&1.75 	&1.36 	&1.53 	&1.56\\
 \bottomrule

\end{tabular}
\end{table}

 \begin{table}[!htbp]
\centering
\caption{IT and CPU of CD, RCD, GSO and RGSO for $m \times n$ matrices $A$ with $n = 100$ and different $m$ when the overdetermined linear system is consistent}
\label{table2}
\renewcommand\arraystretch{0.75}
\begin{tabular}{cccccccc}
\toprule
\multicolumn{2}{c}{ $ m \times n $ }& $1000 \times 100$ & $2000 \times 100 $ & $3000 \times 100$ & $4000 \times 100$ & $5000 \times 100$ \\
\hline
\multirow{2}*{CD}   & IT             &   -	 &     -	 &     -	 &     -	&      -\\
                     & CPU          &   -	 &     -	 &     -	 &     -	&      - \\
\hline
\multirow{2}*{GSO}  & IT          &84180	&81595	&80120	&80630	&79131\\
                     & CPU         &0.2945 	&0.5315 	&0.9227 	&1.7860 	&2.6375\\
\hline
\multicolumn{2}{c}{ speed-up$^1$ }       &   -	 &     -	 &     -	 &     -	&      - \\
\hline
\multirow{2}*{RCD}  & IT          &3909	&3304	&3564	&3391	&3187\\
                     & CPU         &0.0278 	&0.0318 	&0.0475 	&0.0719 	&0.0957\\
\hline\multirow{2}*{RGSO}  & IT        &1657	&1598	&1486	&1751	&1432\\
                     & CPU        &0.0148 	&0.0204 	&0.0264 	&0.0546 	&0.0631 \\
\hline
\multicolumn{2}{c}{ speed-up$^2$ }    &1.88 	&1.56 	&1.80 	&1.32 	&1.52\\
 \bottomrule
\end{tabular}
\end{table}

 \begin{table}[!htbp]
\centering

\caption{IT and CPU of CD, RCD, GSO and RGSO for $m \times n$ matrices $A$ with $n = 150$ and different $m$ when the overdetermined linear system is consistent}
\label{table3}
\renewcommand\arraystretch{0.75}
\begin{tabular}{cccccccc}
\toprule
\multicolumn{2}{c}{ $ m \times n $ }& $1000 \times 150$ & $2000 \times 150 $ & $3000 \times 150$ & $4000 \times 150$ & $5000 \times 150$ \\
\hline
\multirow{2}*{CD}   & IT          &    -	&      -	&      -	&      -	&      -\\
                     & CPU       &    -	&      -	&      -	&      -	&      - \\
\hline
\multirow{2}*{GSO}  & IT           &276537	&270070	&260799	&259227	&259033\\
                     & CPU        &0.9292 	&1.6746 	&2.7657 	&5.9676 	&9.1506\\
\hline
\multicolumn{2}{c}{ speed-up$^1$ }    &    -	&      -	&      -	&      -	&      - \\
\hline
\multirow{2}*{RCD}  & IT         &6781	&5375	&5371	&5288	&5358\\
                     & CPU       &0.0472 	&0.0486 	&0.0660 	&0.1095 	&0.1712\\
\hline\multirow{2}*{RGSO}  & IT       &2880	&2574	&2466	&2352	&2547 \\
                     & CPU        &0.0241 	&0.0304 	&0.0415 	&0.0741 	&0.1195\\
\hline
\multicolumn{2}{c}{ speed-up$^2$ }    &1.96 	&1.60 	&1.59 	&1.48 	&1.43\\
 \bottomrule
\end{tabular}
\end{table}

 \begin{table}[!htbp]
\centering
\caption{IT and CPU of CD, RCD, GSO and RGSO for $m \times n$ matrices $A$ with $n = 50$ and different $m$ when the overdetermined linear system is inconsistent}
\label{table4}
\renewcommand\arraystretch{0.75}
\begin{tabular}{cccccccc}
\toprule
\multicolumn{2}{c}{ $ m \times n $ }& $1000 \times 50$ & $2000 \times 50 $ & $3000 \times 50$ & $4000 \times 50$ & $5000 \times 50$ \\
\hline
\multirow{2}*{CD}   & IT          &73331	&73895	&73910	&74810	&74606\\
                     & CPU        &0.1591 	&0.3004 	&0.5266 	&1.0081 	&1.4170  \\
\hline
\multirow{2}*{GSO}  & IT          &11124	&10955	&10875	&10984	&10910\\
                     & CPU        &0.0442 	&0.0716 	&0.1337 	&0.2411 	&0.3376\\
\hline
\multicolumn{2}{c}{ speed-up$^1$ }    &3.60 	&4.20 	&3.94 	&4.18 	&4.20 \\
\hline
\multirow{2}*{RCD}  & IT          &1736	&1786	&1706	&1599	&1514\\
                     & CPU         &0.0129 	&0.0164 	&0.0244 	&0.0338 	&0.0414\\
\hline\multirow{2}*{RGSO}  & IT      &744	&718	&762	&737	&769 \\
                     & CPU        &0.0067 	&0.0087 	&0.0142 	&0.0223 	&0.0327 \\
\hline
\multicolumn{2}{c}{ speed-up$^2$ }     &1.91 	&1.88 	&1.72 	&1.52 	&1.26\\
 \bottomrule
\end{tabular}
\end{table}

\begin{table}[!htbp]
\centering
\caption{IT and CPU of CD, RCD, GSO and RGSO for $m \times n$ matrices $A$ with $n = 100$ and different $m$ when the overdetermined linear system is inconsistent}
\label{table5}
\renewcommand\arraystretch{0.75}
\begin{tabular}{cccccccc}
\toprule
\multicolumn{2}{c}{ $ m \times n $ }& $1000 \times 100$ & $2000 \times 100 $ & $3000 \times 100$ & $4000 \times 100$ & $5000 \times 100$ \\
\hline
\multirow{2}*{CD}   & IT          & -&	       -	&       -	  &     -	&       -\\
                     & CPU        &  -	&       -	&       -	 &      -	&       -  \\
\hline
\multirow{2}*{GSO}  & IT         &84415	&84104	&80361	&79462	&79572\\
                     & CPU        &0.2829 	&0.5457 	&0.9160 	&1.7187 	&2.5587\\
\hline
\multicolumn{2}{c}{ speed-up$^1$ }    &  -	&       -	&       -	 &      -	&       - \\
\hline
\multirow{2}*{RCD}  & IT         &3973	&3511	&3599	&3092	&3221\\
                    & CPU        &0.0305 	&0.0329 	&0.0473 	&0.0615 	&0.0943\\
\hline\multirow{2}*{RGSO}  & IT     &1676	&1675	&1596	&1456	&1563\\
                     & CPU       &0.0142 	&0.0203 	&0.0279 	&0.0427 	&0.0666\\
\hline
\multicolumn{2}{c}{ speed-up$^2$ }     &2.14 	&1.62 	&1.70 	&1.44 	&1.42\\
 \bottomrule
 \end{tabular}
\end{table}

\begin{table}[!htbp]
\centering
\caption{IT and CPU of CD, RCD, GSO and RGSO for $m \times n$ matrices $A$ with $n = 150$ and different $m$ when the overdetermined linear system is inconsistent}
\label{table6}
\renewcommand\arraystretch{0.75}
\begin{tabular}{cccccccc}
\toprule
\multicolumn{2}{c}{ $ m \times n $ }& $1000 \times 150$ & $2000 \times 150 $ & $3000 \times 150$ & $4000 \times 150$ & $5000 \times 150$ \\
\hline
\multirow{2}*{CD}   & IT                & -	    &   -	  &     -	   &    -	   &    -\\
                     & CPU              & -	    &   -	  &     -	   &    -	   &    -  \\
\hline
\multirow{2}*{GSO}  & IT          &288578	&267841	&265105	&262289	&258320\\
                     & CPU        &1.0080 	&1.7435 	&2.9230 	&5.8877 	&7.8848\\
\hline
\multicolumn{2}{c}{ speed-up$^1$ }       &    -	   &    -	  &     -	&       -	   &    -\\
\hline
\multirow{2}*{RCD}  & IT        &6799	&5690	&5340	&4860	&4979\\
                     & CPU       &0.0478 	&0.0520 	&0.0690 	&0.0977 	&0.1390\\
\hline\multirow{2}*{RGSO}  & IT      &2834	&2472	&2463	&2475	&2368\\
                    & CPU      &0.0247 	&0.0300 	&0.0467 	&0.0739 	&0.0979 \\
\hline
\multicolumn{2}{c}{ speed-up$^2$ }      &1.94 	&1.73 	&1.48 	&1.32 	&1.42\\
\bottomrule
\end{tabular}
\end{table}

\begin{table}[!htbp]
\centering
\caption{IT and CPU of CD, RCD, GSO and RGSO for $m \times n$ matrices $A$ with $n = 1000$ and different $m$ when the underdetermined linear system is consistent}
\label{table7}
\renewcommand\arraystretch{0.75}
\begin{tabular}{cccccccc}
\toprule
\multicolumn{2}{c}{ $ m \times n $ }& $100 \times 1000$ & $200 \times 1000 $ & $300 \times 1000$ & $400 \times 1000$ & $500 \times 1000$ \\
\hline
\multirow{2}*{CD}   & IT               &3805	&11193&	22638	&43868	&82643\\
                     & CPU              &0.0025 &	0.0089 	&0.0215 &	0.0499& 	0.1102 \\
\hline
\multirow{2}*{GSO}  & IT          &1621	&3544	&6824	&12339	&24149\\
                     & CPU        &0.0016 &	0.0044 &	0.0111 &	0.0224 	&0.0507\\
\hline
\multicolumn{2}{c}{ speed-up$^1$ }       &1.52 &	2.01 	&1.93 	&2.23 	&2.17\\
\hline
\multirow{2}*{RCD}  & IT          &4113&	10926&	21267	&39220&	70545\\
                     & CPU      &0.0210 &	0.0593 &	0.1151 	&0.2219 &	0.4207\\
\hline\multirow{2}*{RGSO}  & IT       &1876	&4152	&7985	&13158&	24441\\
                    & CPU      &0.0102 &	0.0253 	&0.0497 &	0.0877 &	0.1680 \\
\hline
\multicolumn{2}{c}{ speed-up$^2$ }     &2.05 &	2.34 &	2.31 &	2.53 &	2.50\\
\bottomrule
\end{tabular}
\end{table}

\begin{table}[!htbp]
\centering
\caption{IT and CPU of CD, RCD, GSO and RGSO for $m \times n$ matrices $A$ with $n = 2000$ and different $m$ when the underdetermined linear system is consistent}
\label{table8}
\renewcommand\arraystretch{0.75}
\begin{tabular}{cccccccc}
\toprule
\multicolumn{2}{c}{ $ m \times n $ }& $100 \times 2000$ & $200 \times 2000 $ & $300 \times 2000$ & $400 \times 2000$ & $500 \times 2000$ \\
\hline
\multirow{2}*{CD}   & IT              &3285	&7790	&13913	&21575	&32445\\
                     & CPU             &0.0029 	&0.0071 	&0.0160 	&0.0314 	&0.0487 \\
\hline
\multirow{2}*{GSO}  & IT        &1622	&3324	&5027	&7079	&9858\\
                     & CPU       &0.0022 	&0.0051 	&0.0095 	&0.0156 	&0.0231\\
\hline
\multicolumn{2}{c}{ speed-up$^1$ }      &1.35 	&1.39 	&1.68 	&2.01 	&2.11\\
\hline
\multirow{2}*{RCD}  & IT         &3636	&8113	&14382	&21696	&31904\\
                     & CPU     &0.0195 	&0.0488 	&0.0828 	&0.1320 	&0.1988\\
\hline\multirow{2}*{RGSO}  & IT       &1741	&3580	&5892	&8343	&11745\\
                    & CPU      &0.0114 	&0.0235 	&0.0393 	&0.0598 	&0.0908\\
\hline
\multicolumn{2}{c}{ speed-up$^2$ }     &1.71 	&2.08 	&2.11 	&2.21 	&2.19\\
\bottomrule
\end{tabular}
\end{table}

\begin{table}[!htbp]
\centering
\caption{IT and CPU of CD, RCD, GSO and RGSO for $m \times n$ matrices $A$ with $n = 3000$ and different $m$ when the underdetermined linear system is consistent}
\label{table9}
\renewcommand\arraystretch{0.75}
\begin{tabular}{cccccccc}
\toprule
\multicolumn{2}{c}{ $ m \times n $ }& $100 \times 3000$ & $200 \times 3000 $ & $300 \times 3000$ & $400 \times 3000$ & $500 \times 3000$ \\
\hline
\multirow{2}*{CD}   & IT              &3215	&6924	&11717	&17393	&25296\\
                     & CPU              &0.0029	&0.0069	&0.0138	&0.0238	&0.0384 \\
\hline
\multirow{2}*{GSO}  & IT         &1624	&3267	&4940	&6679	&8683\\
                     & CPU       &0.0025	&0.0051	&0.0099	&0.0146	&0.0214\\
\hline
\multicolumn{2}{c}{ speed-up$^1$ }       &1.19	&1.35	&1.39	&1.63	&1.79\\
\hline
\multirow{2}*{RCD}  & IT         &3475	&7633	&12272	&18248	&25115\\
                     & CPU     &0.0193	&0.0427	&0.0714	&0.1104	&0.1587\\
\hline\multirow{2}*{RGSO}  & IT    &1686	&3499	&5491	&7537	&9966\\
                    & CPU     &0.0107	&0.0225	&0.0377	&0.0532	&0.0724 \\
\hline
\multicolumn{2}{c}{ speed-up$^2$ }    &1.80	&1.90	&1.89	&2.08	&2.19\\
\bottomrule
\end{tabular}
\end{table}

\subsection{Experiments for Random Matrix Collection in $[c,1]$}
\hspace{1.5em}From example \ref{EX3.1}, it can be observed that when the columns of the matrix are nearly linear correlation, the GSO method can find the objective solution of the equation with less iteration steps and running time than the CD method. In order to verify this phenomenon, we construct several $3000\times50$ and $1000\times 3000$ matrices $A$, which entries is independent identically distributed uniform random variables on some interval [c,1]. When the value of $c$ is close to $1$, the column vectors of matrix $A$ are closer to linear correlation. Note that there is nothing special about this interval, and other intervals yield the same results when the interval length remains the same.

From Table \ref{table10} to Table \ref{table12}, it can be seen that no matter whether the system is consistent or inconsistent, overdetermined or underdetermined, with $c$ getting closer to 1, the CD and the RCD method have a significant increase in the number of iterations, and the speed-up$^1$ and the speed-up$^2$ also increase greatly. In Table \ref{table10} and Table \ref{table11}, when $c$ increases to $0.45$, the number of iterations of the CD method exceeds the maximum number of iterations. In Table \ref{table12}, when $c$ increases to $0.6$, the number of iterations of the CD method and RCD method exceeds the maximum number of iterations.

In this group of experiments, it can be observed that when the columns of the matrix are close to linear correlation, the GSO method and the RGSO method can find the least-squares solution more quickly than the CD method and the RCD methd.

\begin{table}[!htbp]
\centering
\caption{IT and CPU of CD, RCD, GSO and RGSO for $A\in R^{3000\times 50}$ with different $c$ when the overdetermined linear system is consistent}
\label{table10}
\renewcommand\arraystretch{0.75}
\begin{tabular}{cccccccc}
\toprule
\multicolumn{2}{c}{ c }&0.15	&0.30&	0.45	&0.60	&0.75&	0.90 \\
\hline
\multirow{2}*{CD}   & IT              &141636	&273589	 &     -	 &     -	   &   -	  &    -\\
                     & CPU           &0.9638 	&1.8351  	 &     -	  &    -	   &   -	  &    - \\
\hline
\multirow{2}*{GSO}  & IT         &12201	&12979	&12763	&11814	&10126	&7017\\
                     & CPU       &0.1575 	&0.1625 	&0.1583 	&0.1519 	&0.1261 	&0.0862\\
\hline
\multicolumn{2}{c}{ speed-up$^1$ }       &6.12 	&11.30	     & -	    &  -	     & -	    &  -\\
\hline
\multirow{2}*{RCD}  & IT        &2196	&3850	&6828	&13978	&36858	&216260\\
                     & CPU       &0.0278 	&0.0483 	&0.0851 	&0.1752 	&0.4506 	&2.6451\\
\hline\multirow{2}*{RGSO}  & IT       &749	&757	&650	&696	&572	&421\\
                    & CPU     &0.0145 	&0.0145 	&0.0124 	&0.0132 	&0.0111 	&0.0079\\
\hline
\multicolumn{2}{c}{ speed-up$^2$ }     &1.92 	&3.33 	&6.87 	&13.22 	&40.68 	&336.90\\
\bottomrule
\end{tabular}
\end{table}

\begin{table}[!htbp]
\centering
\caption{IT and CPU of CD, RCD, GSO and RGSO for $A\in R^{3000\times 50}$ with different $c$ when the overdetermined linear system is inconsistent}
\label{table11}
\renewcommand\arraystretch{0.75}
\begin{tabular}{cccccccc}
\toprule
\multicolumn{2}{c}{ c }&0.15	&0.30&	0.45	&0.60	&0.75&	0.90 \\
\hline
\multirow{2}*{CD}   & IT             &140044	&270445	   &   -	&      -	&      -	  &    -\\
                     & CPU           &0.9483 	&1.8366	&      -	 &     -	&      -	&      - \\
\hline
\multirow{2}*{GSO}  & IT          &12075	&12910	&12678	&11689	&10118	&7112\\
                     & CPU        &0.1602 	&0.1623 	&0.1598 	&0.1519 	&0.1284 	&0.0882\\
\hline
\multicolumn{2}{c}{ speed-up$^1$ }      &5.92 	&11.32  	   &   -	  &    -	&      -	&      -\\
\hline
\multirow{2}*{RCD}  & IT       &2227	&3864	&6493	&14256	&37734	&209427\\
                     & CPU     &0.0301 	&0.0479 	&0.0825 	&0.1783 	&0.4645 	&2.5826\\
\hline\multirow{2}*{RGSO}  & IT     &722	&713	&650	&646	&557	&474\\
                    & CPU     &0.0158 	&0.0145 	&0.0153 	&0.0131 	&0.0121 	&0.0088 \\
\hline
\multicolumn{2}{c}{ speed-up$^2$ }     &1.91 	&3.32 	&5.39 	&13.58 	&38.52 	&292.21\\
\bottomrule
\end{tabular}
\end{table}

\begin{table}[!htbp]
\centering
\caption{IT and CPU of CD, RCD, GSO and RGSO for $A\in R^{1000\times 3000}$ with different $c$ when the underdetermined linear system is consistent}
\label{table12}
\renewcommand\arraystretch{0.75}

\begin{tabular}{cccccccc}
\toprule
\multicolumn{2}{c}{ c }&0.15	&0.30&	0.45	&0.60	&0.75&	0.90 \\
\hline
\multirow{2}*{CD}   & IT             &143373	&246942	&441147	&-	&-	&-\\
                     & CPU           &0.3344 	&0.5818 	&1.0359 	&- 	&- 	&- \\
\hline
\multirow{2}*{GSO}  & IT         &24358	&23888	&22310	&19485	&16795	&11509\\
                     & CPU       &0.1072 	&0.1048 	&0.0987 	&0.0878 	&0.0748 	&0.0525\\
\hline
\multicolumn{2}{c}{ speed-up$^1$ }       &3.12 	&5.55 	&10.49 	&- 	&- 	&-\\
\hline
\multirow{2}*{RCD}  & IT       &122119	&194440	&346301	&-	&-	&-\\
                     & CPU       &0.9120 	&1.4251 	&2.5529 	&- 	&-	&-\\
\hline\multirow{2}*{RGSO}  & IT       &28166	&24936	&24201	&22318	&18433	&13717\\
                    & CPU    &0.2711 	&0.2450 	&0.2318 	&0.2082 	&0.1813 	&0.1306\\
\hline
\multicolumn{2}{c}{ speed-up$^2$ }    &3.36 	&5.82 	&11.01 	&-	&- 	&-\\
\bottomrule
\end{tabular}
\end{table}

\section{Conclusion}
\hspace{1.5em}A new extension of the CD method and its randomized version, called the GSO method and the RGSO method, are proposed for solving the linear least-squares problem. The GSO method is deduced to be convergent, and an estimate of the convergence rate of the RGSO method is obtained. The GSO method and the RGSO method are proved to converge faster than the CD method and the RCD method, respectively. Numerical experiments show the effectiveness of the two methods, especially when the columns of coefficient matrix $A$ are close to linear correlation.
\section*{Acknowledgments}
\hspace{1.5em}This work was supported by the National Key Research and Development Program of China [grant number 2019YFC1408400], and the Science and Technology Support Plan for Youth Innovation of University in Shandong Province [No.YCX2021151].


\begin{thebibliography}{}

\end{thebibliography}


\begin{thebibliography}{99}
\bibitem{BW19} {\sc Z. Z. Bai and W. T. Wu}, {\em On greedy randomized coordinate descent methods for solving large
linear least-squares problems}, Numer. Linear Algebra Appl., 26 (2019), pp. 1--15.
\bibitem{BW199} {\sc Z. Z. Bai and W. T. Wu}, {\em On partially randomized extended Kaczmarz method for solving large sparse overdetermined inconsistent linear systems}, Linear Algebra Appl., 578 (2019), pp. 225--250.
\bibitem{Be74} {\sc A. Ben-Israel}, {\em Generalized inverses: Theory and applications}, Pure Appl. Math., 139 (1974), pp. 125--147.
\bibitem{B96}{\sc A. Bjorck}, {\em Numerical Methods for Least Squares Problems}, SIAM, Philadelphia, PA, 1996.
\bibitem{BS96} {\sc C. Bouman and K. Sauer}, {\em A unified approach to statistical tomography using coordinate descent
optimization}, IEEE Trans. Image Process., 5 (1996), pp. 480--492.
\bibitem{BH11}{\sc P. Breheny and J. Huang}, {\em Coordinate descent algorithms for nonconvex penalized regression,
with applications to biological feature selection}, Ann. Appl. Stat., 5 (2011) pp. 232--253.
\bibitem{CH08}{\sc K. W. Chang, C. J. Hsieh and C. J. Lin}, {\em Coordinate descent method for large-scale l2-loss linear
support vector machines}, J. Mach. Learn. Res., 9 (2008) pp. 1369--1398.
\bibitem{GL96} {\sc G. Golub and C. V. Loan}, {\em Matrix Computations}, Johns Hopkins University Press, 1996.
\bibitem{K37}{\sc S. Kaczmarz}, {\em Angen$\ddot{a}$herte aufl$\ddot{o}$sung von systemen linearer gleichungen}, Bull. Internat. A-cad. Polon.Sci. Lettres A, 29 (1937) pp. 335--357.
\bibitem{D10}{\sc D. Leventhal and A. Lewis}, {\em Randomized methods for linear constraints: convergence rates and
conditioning}, Math. Oper. Res., 35 (2010) pp. 641--654.
\bibitem{LX15} {\sc Z. Lu and L. Xiao}, {\em  On the complexity analysis of randomized block-coordinate descent methods},
Math. Program., 152 (2015) pp. 615--642.
\bibitem{MN15} {\sc A. Ma, D. Needell and A. Ramdas}, {\em Convergence properties of the randomized extended Gauss-Seidel and Kaczmarz methods}, SIAM J. Matrix Anal. Appl., 36 (2015) pp. 1590--1604.
\bibitem{NN17}{\sc I. Necoara, Y. Nesterov and F. Glineur}, {\em Random block coordinate descent methods for linearly
constrained optimization over networks}, J. Optim. Theory Appl., 173 (2017) pp. 227--254.
\bibitem{NW13}{\sc D. Needell and R. Ward}, {\em Two-subspace projection method for coherent overdetermined systems},
J. Fourier Anal. Appl., 19 (2013) pp. 256--269.
\bibitem{XY21} {\sc X. Yang}, {\em A geometric probability randomized Kaczmarz method for large scale linear systems}, Appl. Numer. Math., 164 (2021) pp. 139--160.
\bibitem{ZJ19} {\sc J. J. Zhang}, {\em A new greedy Kaczmarz algorithm for the solution of very large linear systems}, Appl. Math. Lett., 91 (2019) pp. 207--212.
\bibitem{NS17} {\sc Y. Nesterov and S. Stich}, {\em Efficiency of the accelerated coordinate descent method on structured
optimization problems}, SIAM J. Optim., 27 (2017) pp. 110--123.
\bibitem{PP12}{\sc C. PoPa, T. Preclik, H. K$\ddot{o}$stler and U. R$\ddot{u}$de}, {\em On kaczmarz's projection iteration as a direct solver
for linear least squares problems}, Linear. Algebra Appl., 436 (2012) pp. 389--404.
\bibitem{QS07} {\sc A. Quarteroni, R. Sacco and F. Saleri}, {\em Numerical Mathematics}, Springer New York, 2007.
\bibitem{CH21} {\sc C. G. Kang and H. Zhou}, {\em The extensions of convergence rates of Kaczmarz-type methods}, J. Comput.Appl. Math., 382 (2021) 113577.
\bibitem{RT14} {\sc P. Richt$\acute{a}$rik and M. Tak$\acute{a}\check{c}$}, {\em  Iteration complexity of randomized block-coordinate descent methods for minimizing a composite function}, Math. Program., 144 (2014) pp. 1--38.
\bibitem{Ru83}{\sc A. Ruhe}, { \em Numerical aspects of gram-schmidt orthogonalization of vectors}, Linear Alg. Appl.,
52 (1983) pp. 591--601.
\bibitem{ST11} {\sc S. Shalev-Shwartz and A. Tewari}, {\em Stochastic methods for $\ell1$-regularized loss minimization}, J.
Mach. Learn. Res., 12 (2011) pp. 1865--1892.
\bibitem{YC19} {\sc Y. Liu and C. Q. Gu}, {\em Variant of greedy randomized Kaczmarz for ridge regression}, Appl. Numer. Math., 143 (2019) pp. 223--246.
\bibitem{DH19} {\sc K. Du and H. Gao}, {\em A new theoretical estimate for the convergence rate of the maximal residual Kaczmarz algorithm}, Numer. Math. Theor. Meth. Appl., 12 (2019) pp. 627--639.
\bibitem{GL20} {\sc Y. J. Guan, W. G. Li, L. L. Xing and T. T. Qiao}, {\em A note on convergence rate of randomized Kaczmarz method}, Calocolo, 57 (2020).
\bibitem{SV09} {\sc T. Strohmer and R. Vershynin}, {\em A randomized kaczmarz algorithm with exponential convergence},
J. Fourier Anal. Appl., 15 (2009) pp. 262--278.
\bibitem{W15}{\sc S. Wright}, {\em Coordinate descent algorithms}, Math. Program., 151 (2015) pp. 3--34.
\bibitem{YW99} {\sc J. Ye, K. Webb, C. Bouman and R. Millane}, {\em  Optical diffusion tomography by iterative coordinate-descent optimization in a bayesian framework}, J. Opt. Soc. Am. A, 16 (1999)
pp. 2400--2412.
\bibitem{ZG20} {\sc J. H. Zhang and J. H. Guo}, {\em On relaxed greedy randomized coordinate descent methods for solving
large linear least-squares problems}, Appl. Numer. Math., 157 (2020) pp. 372--384.
\end{thebibliography}
\end{document}